\theoremstyle{plain}
\newtheorem{theorem}{Theorem}
\newtheorem{corollary}[theorem]{Corollary}
\newtheorem{proposition}[theorem]{Proposition}
\theoremstyle{definition}
\newtheorem{definition}[theorem]{Definition}
\newtheorem{example}[theorem]{Example}
\newtheorem{note}[theorem]{Note}
\newtheorem{remark}[theorem]{Remark}
\title{Dual Frame Completion Problem}
\author{Roza Aceska}
\address{Department of Mathematical Sciences, Ball State University,   IN}
\email{raceska@bsu.edu} 
\author{Yeon Hyang Kim}
\address{Department of Mathematics, Central Michigan University, MI}
\email{kim4y@cmich.edu}
\author{Sivaram K. Narayan}
\address{Department of Mathematics, Central Michigan University, MI}
\email{naray1sk@cmich.edu}
\subjclass[2010]{Primary 42C15, 05B20, 15A03}
\date{2024.}
\begin{document}
\maketitle

\begin{abstract}
In this paper we present the construction of an exact dual frame under specific structural assumptions posed on the dual frame.  When given a  frame $F$ for a finite dimensional Hilbert space, and a set of vectors $H$  that is assumed to be a subset of a dual frame of $F$, we answer the following question:   Which dual frame $G$ for $F$ - if it exists - completes the given set $H$?  Solutions are explored through a direct and an indirect approach, as well as via the singular value decomposition of the synthesis operator of $F$.

\end{abstract}

 %
 
 % \LARGE
 
 %\newpage

 \section{Introduction }
  
Frames are a generalization of bases and share some properties with bases. The redundancy of vectors in a (finite) frame allows for protection against loss of information during transmission of data. 
This redundancy is not present in bases and makes frames more adaptable for ensuring data integrity.
Moreover, frames allow for various specific properties, enabling the recovery of a signal through different reconstruction formulas that are not available for bases.
As an example, a collection of vectors $\{  x_i \}_{i=1}^k$
satisfying the reconstruction formula 
$x =\sum_{i=1}^k \langle x, x_i\rangle x_i$ is called a Parserval frame. 
In a smilar fashion, given a frame  $\{  x_i \}_{i=1}^k$ in  a Hilbert space $ \mathbb{F}^n$, 
a sequence $\{  y_i \}_{i=1}^k$ in $ \mathbb{F}^n$ is called a dual frame for $\{  x_i \}_{i=1}^k$ if it satisfies the reconstruction formula 
$x =\sum_{i=1}^k \langle x, y_i\rangle x_i$ for all $x$ in $ \mathbb{F}^n$, \cite{Ch2003, CMKLT06, CCNST,HKLW07}.

The problem of signal recovery from a partially erased data set of analysis coefficients
 was studied in  \cite{L15}, where the authors reconstruct an analyzed signal $f$ via a procedure they called bridging.   They replace the erased coefficients of   $f$ with respect to a frame $F$ by appropriate linear combinations of the non-erased coefficients, and they deliver a perfect reconstruction of $f$.  Implicitly, the bridging method is related to constructing a dual frame for $F$, which delivers perfect reconstruction for the signal at hand. The results presented in our paper here are more general, and serve the purpose of reconstruction of any signal, as our dual frame completion is exact, and often offers multiple dual frame sets. 
 
We explore the possibility of creating an exact dual frame under certain assumption for its structure, and with that provide a solution to the erasure problem eg. when the analysis coefficients are erased due to the malfunction of a portion of the measurement sensors, modeled by  the dual frame.  
More specifically, in this paper we answer the following dual frame completion question:  
Given a finite frame in $ \mathbb{F}^n$ denoted
 $F=\{f_1, f_2,  \hdots, f_k\}$, let 
 $\{h_1, h_2,  \hdots, h_s\}$ be a given set of $s$ vectors in $\mathbb{C}^n$ where $s \le k$. 
 Does there exist a dual frame $G =\{g_1, g_2,  \hdots, g_k\}$ of $F$ such that 
  $g_1=h_1, g_2=h_2, \ldots, g_s=h_s$?
  If such dual frame exists, the we say that  $G$ is a   dual frame completion of $\{h_1,  h_2, \hdots, h_s \}$. We may also say that the  dual frame completion problem  has a solution.

 \section{Preliminaries}

In this paper we study the dual frame completion problem. We assume that select vectors are in the dual frame of a fixed frame, and we explore the possibility of finding such a dual frame for the  fixed frame.  In section~\ref{sectionCompletion} we offer three approaches towards dual frame completion. %, while in the conclusion we offer ideas for future research in this direction.  
 We would like to state that our results  hold for all cases up to permutation.

  %; most definitions and results in this section can be found in \cite{undergradFrames}.
%By $ \mathbb{F}^n$  we denote a  finite-dimensional Hilbert space.  % $*$ instead of $T$.}

%%%%

\begin{definition} \label{framedefinition} Let $k\geq n$. 
A sequence of vectors $F=\{f_i\}_{i=1}^k$ in  $\mathbb{F}^n$ is a \textit{frame} for $\mathbb{F}^n$ if   there exist  real constants $0 < A \leq B < +\infty$ such that for every $f \in \mathbb{F}^n$, the following holds:
\begin{equation} A\lVert f \rVert ^2 \leq \sum_{i=1}^k|\langle f,f_i\rangle|^2 \leq B \lVert f \rVert ^2.\end{equation} 
  
%A frame is called \textit{tight} if $A = B$. 
\end{definition}

In Definition \ref{framedefinition},  $A$ is called the lower frame bound for $F$, and $B$ is called the upper frame bound for $F$. A frame    is called \textit{tight} if   $A=B$.  

\begin{note} 
Unless otherwise noted, in this paper we work with frames for $\mathbb{F}^n$ with $k$ vectors, $k \geq n$. By $F$ we denote both the frame $F=\{f_i\}_{i=1}^k$ for $\mathbb{F}^n$, and the $n\times k$ matrix $F=[f_1 \; \dots \; f_k]$.  \end{note} 

 In a  finite-dimensional space, frames are simply  spanning sets of the space.
All bases are frames by definition, but frames in general are not bases.  
Each frame has an associated  analysis, synthesis, and frame operator. 
 In signal processing, $F^*$ is  used to compute the analysis coefficients, while $F$ would be  used for synthesis, to reconstruct the signal. 
 
The \textit{frame operator} of  $F = \{f_i\}_{i=1}^k$ is  
 a linear operator on  $\mathbb{F}^n$, with matrix $S=FF^*$. 
It follows that the frame operator is a positive, self-adjoint, invertible operator.  
  
\begin{definition}
Let $\{f_i\}_{i=1}^k$ be a frame for $\mathbb{F}^n$. A \textit{dual frame} for $\{f_i\}_{i=1}^k$ is a frame $\{g_i\}_{i=1}^k$ such that for every $f \in \mathbb{F}^n$, \begin{equation*}
    f = \sum_{i=1}^k\langle f,g_i \rangle f_i = \sum_{i=1}^k\langle f,f_i \rangle g_i.
\end{equation*}%$$f = \sum_{i=1}^k\langle f,g_i \rangle f_i.$$
\end{definition}
 Thus,  
a frame $G = \{g_i\}_{i=1}^k$  is a dual frame for $F$ if  (and only if)   $FG^*=GF^*=I_{n\times n}$.
In this case the {\it analysis} is performed with one frame, and the {\it synthesis} with the other frame.

 The dual frame is not necessarily unique; in general, a frame can have infinitely many dual frames. 
Every frame does have at least one dual frame, though; this is the \textit{canonical dual frame}. 
The  {canonical dual frame} for a frame $F = \{f_i\}_{i=1}^k$ for $\mathbb{F}^n$ with frame operator $S$ is the frame $\tilde{F} = \{S^{-1}f_i\}_{i=1}^k$.  
The frame operator for $\tilde{F}$ is $S^{-1}$.
  
With  tight frames, a useful feature is  that its frame operator $S$ %of a tight frame
is a scalar multiple of the identity operator, where that scalar is the frame bound $A=B$. Thus, the canonical dual frame   of a tight frame $F = \{f_i\}_{i=1}^k$,  is simply  
$\{\frac{1}{A}f_i\}_{i=1}^k$. When a frame $F$ is tight, the representation of a vector $f$ in $\mathbb{H}$ depends only on $F$, % using only $f$, the vectors in $F$, and the constant $a$ 
that is, we only need $F$ for the analysis and synthesis.

Since $F$ is a frame, it has full row rank. And the dual frame completion involves finding a dual frame $G$
such that $FG^*=I$, 
we use the following theorem to address the dual frame completion problem.
\begin{theorem} \label{f1}
Let $A$, $B$ and $C$ be some matrices such that $AB=C$.
The following statements are equivalent:
\begin{enumerate}
\item The system of linear equations \( AB = C \) has a solution.
\item \( \text{rank}(A) = \text{rank}([A \; C]) \).
\item The columns of \( C \) are in the span of the columns of \( A \).
\end{enumerate}
If \( A \) has full row rank, then
   the pseudoinverse \( A^+ \) is given by
$$ A^+ = A^* (A A^*)^{-1},$$
and a particular solution \( B \) can be found using \( A^+ \) as
   $B = A^+ C.$
\end{theorem}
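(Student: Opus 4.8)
The plan is to dispatch the three-way equivalence first via the usual column-space dictionary, and then verify the pseudoinverse formula and the particular solution by direct computation.

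First I would rewrite the matrix equation column-by-column. Writing $C=[c_1\;\cdots\;c_p]$ and seeking $B=[b_1\;\cdots\;b_p]$, the equation $AB=C$ decouples into the $p$ linear systems $Ab_j=c_j$. Each of these is consistent exactly when $c_j$ lies in the column space of $A$, which immediately gives $(1)\Leftrightarrow(3)$. For $(2)\Leftrightarrow(3)$, I would use that the column space of the augmented matrix $[A\;C]$ equals the sum of the column spaces of $A$ and of $C$; hence $\operatorname{rank}([A\;C])=\operatorname{rank}(A)$ precisely when every column of $C$ already lies in the column space of $A$, i.e. $(3)$. This closes the cycle.

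Next, suppose $A$ has full row rank, say $A$ is $m\times n$ with $\operatorname{rank}(A)=m\le n$. I would first check that $AA^*$ is invertible: it is $m\times m$, and since $A^*x=0$ whenever $AA^*x=0$ (take the inner product of $AA^*x$ with $x$), we get $\ker(AA^*)=\ker(A^*)=\{0\}$, so $\operatorname{rank}(AA^*)=m$ and $AA^*$ is invertible; thus $A^+=A^*(AA^*)^{-1}$ is well defined. A one-line computation gives $AA^+=(AA^*)(AA^*)^{-1}=I_m$, so $A^+$ is a right inverse of $A$. If one wants to confirm it is genuinely the Moore–Penrose pseudoinverse rather than merely some right inverse, one verifies the four Penrose identities, which follow at once from $AA^+=I_m$ together with the self-adjointness of $A^+A=A^*(AA^*)^{-1}A$.

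Finally, for the particular solution: since $A$ has full row rank its column space is all of $\mathbb{F}^m$, so condition $(3)$ holds automatically for every $C$ of the appropriate size, and a solution exists by the equivalence. Setting $B=A^+C$ gives $AB=(AA^+)C=I_mC=C$, as claimed. I do not anticipate a real obstacle here; the argument is routine linear algebra, and the only step demanding a little care is the invertibility of $AA^*$ under the full-row-rank hypothesis (and, if one insists on it, the check that $A^+$ is the pseudoinverse and not just a right inverse).
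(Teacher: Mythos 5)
Your proof is correct and complete; the paper states Theorem \ref{f1} without proof, treating it as a standard linear-algebra fact, and your argument (column-by-column consistency, the augmented-matrix rank criterion, invertibility of $AA^*$ from $\ker(AA^*)=\ker(A^*)$, and the verification $AB=(AA^+)C=C$) is exactly the standard one the paper implicitly relies on. Nothing is missing.
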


 \section{A surgery problem in finding the dual frame }\label{sectionCompletion}
 
In this section we  find all the possible dual frames $G$ of a known frame $F=\{f_1, \hdots, f_k\}$ for  $\mathbb{F}^n$, $k\geq n$, when the first few 
vectors in $G$ are fixed.  
Given a set of vectors $h_1, h_2, \hdots, h_s \in \mathbb{F}^n$, $s\leq k$, if  there exists a dual frame $G =\{g_1, \hdots, g_k\}_{i=1}^k$ for $F$ such that $g_1=h_1, g_2=h_2, \ldots, g_s=h_s$, then we say that $G$ is a {\it  dual frame completion} of $h_1,  h_2, \hdots, h_s$ i.e. we say that the problem of dual frame completion has a solution.  
 Depending on the application, specific characteristics of the dual frame may be prioritized. For example, one might aim to retain certain directional vectors in the dual frame, or alternatively, minimize the number of nonzero entries within each vector.

To address this problem, we use  Theorem \ref{f1} to classify the conditions under which a dual frame completion is possible. When a solution exists, the pseudo-inverse method is employed to construct a specific solution, ensuring practical implementation of the desired completion. 

\subsection{On the direct approach}

A dual frame pair $(F, G)$ for $\mathbb{F}^n$ satisfies the equality $FG^*=I_{n\times n}$. In this subsection we utilize  said  equality  to solve the problem of dual frame completion. This is the  {\it direct approach} to the dual frame completion problem.
We begin with the following example.
 
\begin{example}\label{exwithkernels}  Let
%\begin{center}
$\displaystyle F =[F_0 \; F_1]$,   where $\displaystyle F_0=\begin{bmatrix}
      1 &2  \\1&1 
  \end{bmatrix}$, $F_1=\begin{bmatrix}
       1 &-1&1\\ 0&1&2
  \end{bmatrix}$. %\end{center}
 The columns of matrix  $F$ form  a frame in $\mathbb{R}^2$. It shows that  the columns of the matrix $G$, where 
\begin{center} $  G=\begin{bmatrix}
 -1 & 1 & -3x & -2x & x\\2&-1 &-3y & -2y & y
 \end{bmatrix}=[G_0 \; G_1]$, with $G_0=\begin{bmatrix}
     - 1 &1 \\2&-1 
  \end{bmatrix}$, 
\end{center} form a  dual frame for $F$, for all  $x, y \in \mathbb{R}$.
\end{example}

Observe that in Example \ref{exwithkernels},   $ F_0 $   is a basis for $\mathbb{R}^2$, with dual basis
$G_0$, so $ F_0G_0^* =I$. 
Furthermore, by the assumption that $G$ is a dual frame of $F$ we have $I=FG^*= F_0G_0^* +F_1G_1^*$, thus $ F_1 G_1^T = 0, $ which is equiavlent to 
$(\text{ker}G_1)^\perp \subseteq \text{ker} F_1.$ 
The above observation leads us the following result.

 %%  
%\section{Dual basis completion}
 %%  

  %\vspace{8.5mm}

  \begin{theorem}\label{thmKernel}
  Let $F_0$ be a frame for $\mathbb{F}^n$, with dual frame  $G_0$, where $|F_0|=|G_0|=s\geq n$. Let  $F=[  F_0 \; F_1]_{n\times k}$ be a  frame for $\mathbb{F}^n$.
Given any $G_1 \in \mathbb{F}^{n\times (k-s)}$,  we have that 
%\begin{center}
\begin{equation}\label{dualbasF}
G=[G_0 \; G_1] \; \text{  is a dual frame for $F$} \end{equation} if and only if % whenever
\begin{equation}\label{kernelCond}(\text{ker}G_1)^\perp \subseteq \text{ker} F_1.\end{equation}  

 In particular, if the columns of $F_1$ are linearly independent, then the only possible choice of $G$ for the completion of the dual frame is $G=[G_0 \;0]$.
     \end{theorem}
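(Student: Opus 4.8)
The plan is to establish the biconditional \eqref{dualbasF} $\Leftrightarrow$ \eqref{kernelCond} by unpacking the defining identity $FG^* = I_{n\times n}$ in block form. Writing $F = [F_0 \; F_1]$ and $G = [G_0 \; G_1]$ with compatible column partitions, we have
\begin{equation*}
FG^* = F_0 G_0^* + F_1 G_1^*,
\end{equation*}
and since $G_0$ is assumed to be a dual frame for $F_0$ we already know $F_0 G_0^* = I_{n\times n}$. Hence $G$ is a dual frame for $F$ precisely when $F_1 G_1^* = 0$. (One should note that the condition $FG^* = I$ automatically forces $G$ to be a frame, since a left-invertible $F^*$-like relation gives $G$ full row rank; alternatively this follows from the characterization $FG^* = GF^* = I$ recalled in the Preliminaries, and the symmetric identity $G F^* = G_0 F_0^* + G_1 F_1^* = I$ holds simultaneously because $G_0 F_0^* = I$.) So the whole theorem reduces to the linear-algebra equivalence
\begin{equation*}
F_1 G_1^* = 0 \quad \Longleftrightarrow \quad (\ker G_1)^\perp \subseteq \ker F_1.
\end{equation*}

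For this equivalence I would argue as follows. The matrix product $F_1 G_1^* = 0$ says exactly that every column of $G_1^*$ lies in $\ker F_1$, i.e. $\operatorname{col}(G_1^*) \subseteq \ker F_1$, equivalently $\operatorname{row}(G_1) \subseteq \ker F_1$. Now $\operatorname{row}(G_1)$, the row space of $G_1$, is precisely $(\ker G_1)^\perp$ by the fundamental theorem of linear algebra (the orthogonal complement of the null space is the row space, over $\mathbb{R}$; over $\mathbb{C}$ one uses the conjugate-transpose consistently, which matches the use of $G_1^*$ here). Therefore $F_1 G_1^* = 0$ is the same statement as $(\ker G_1)^\perp \subseteq \ker F_1$, which is \eqref{kernelCond}. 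This gives the biconditional.

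For the final "in particular" claim, suppose the columns of $F_1$ are linearly independent, so $F_1$ (an $n \times (k-s)$ matrix) has trivial kernel: $\ker F_1 = \{0\}$. Then \eqref{kernelCond} becomes $(\ker G_1)^\perp \subseteq \{0\}$, i.e. $(\ker G_1)^\perp = \{0\}$, which forces $\ker G_1$ to be the entire domain $\mathbb{F}^{k-s}$, hence $G_1 = 0$. Thus the only dual frame completion of the given form is $G = [G_0 \; 0]$, and conversely $G = [G_0 \; 0]$ clearly works since then $F G^* = F_0 G_0^* = I$.

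I do not expect a serious obstacle here; the argument is essentially bookkeeping with block matrices plus the standard orthogonal-complement/row-space identity. The one point that needs a little care is making sure the $\mathbb{F} = \mathbb{C}$ case is handled correctly: the identity $(\ker M)^\perp = \operatorname{row}(M)$ must be stated with the Hermitian adjoint (so that $\operatorname{row}(M) = \operatorname{col}(M^*)$), and this is exactly the convention already built into writing the dual-frame condition as $F_1 G_1^* = 0$, so everything is consistent. A secondary point worth a sentence is verifying that the resulting $G$ is genuinely a frame (spanning set) and not merely a sequence satisfying the reconstruction identity — but this is immediate from $FG^* = I$ together with the equivalent identity $GF^* = I$, which shows $G$ has full row rank $n$.
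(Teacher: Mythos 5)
Your proposal is correct and follows essentially the same route as the paper: expand $FG^* = F_0G_0^* + F_1G_1^*$, use $F_0G_0^* = I$ to reduce the dual-frame condition to $F_1G_1^* = 0$, identify this with $(\ker G_1)^\perp = \operatorname{range}(G_1^*) \subseteq \ker F_1$, and note that trivial $\ker F_1$ forces $G_1 = 0$. Your version is in fact slightly more careful than the paper's (which delegates the key equivalence to the discussion after Example \ref{exwithkernels}), particularly in spelling out the Hermitian-adjoint convention and in verifying that $FG^*=I$ already guarantees $G$ has full row rank and hence is a frame.
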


  \begin{proof}
As demonstrated in  the discussion following  Example \ref{exwithkernels}  , \( G = [G_0 \; G_1] \) is a dual frame for \( F \) if and only if 
\((\ker G_1)^\perp \subseteq \ker F_1. \)
By matrix product properties, if the columns of \( F_1 \) are linearly independent, then \( F_1 G_1^* = 0 \) if and only if \( G_1 = 0 \).
\end{proof}

 Theorem~\ref{thmKernel} is particularly  useful in applications when there is a need to expand the dual frame pair, in order  to increase the number of  samples so that the increased redundancy of the analysis frame would provide better protection against data loss. Starting with a dual frame pair  $(F_0, G_0)$, one can  expand frame $F_0$ to a larger frame $F$, and respectively expand $G_0$ to a dual frame for $F$. We see that if the expansion from frame $F_0$ to $F$ is preconditioned i.e. we want to enrich $F_0$ with the columns of $F_1$ then we have exactly the answer when we can expand the dual frame $G_0$ of $F_0$ to a dual frame for $F$.  
By Theorem~\ref{thmKernel}, when $F_1$ has linearly independent columns, the dual frame expansion is trivial, that is $G_1=O$. We can have a nontrivial expansion if we scale the vectors in $G_0$.

 \begin{corollary}\label{anotherC}  
   Let   $F=[F_0 \;\; F_1]$ be a frame for $\mathbb{F}^n$, where $F_0=F_{n\times s}$ is a frame, $F_1= F_{n\times (k-s)}$,   
 and let $G_0 $ be  a dual frame for $F_0$, $ 1 \leq s \le  k$.  Let   $w_1, \hdots, w_s \in \mathbb{F}$ be  some nonnegative scaling coefficients and let $W=diag\{w_1, \hdots, w_{s}\}$.
 If $F_1$ has  linearly independent columns, then 
 $G=[G_0W \; G_1]$ form a dual frame pair if and only if 
$$G_1=(F_1^*F_1)^{-1} \left( I_n - G_0WF_0^*\right) F_1.$$
\end{corollary}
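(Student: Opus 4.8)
The plan is to characterize when $G=[G_0W\ \ G_1]$ is a dual frame for $F=[F_0\ \ F_1]$ by directly writing out the defining identity $FG^*=I_n$ and solving for $G_1$. Expanding the block product gives
\[
I_n = FG^* = F_0(G_0W)^* + F_1 G_1^* = F_0 W^* G_0^* + F_1 G_1^*.
\]
Since the scaling weights $w_1,\dots,w_s$ are real, $W^*=W$, so this rearranges to $F_1 G_1^* = I_n - F_0 W G_0^*$, equivalently $G_1 F_1^* = I_n - G_0 W F_0^*$ after taking adjoints. The key structural input is that $F_1$ has linearly independent columns, so $F_1^*F_1$ is invertible and $(F_1^*F_1)^{-1}F_1^*$ is a left inverse of $F_1$; multiplying $F_1 G_1^* = I_n - F_0 W G_0^*$ on the left by $(F_1^*F_1)^{-1}F_1^*$ and taking adjoints yields the claimed formula $G_1 = (F_1^*F_1)^{-1}(I_n - G_0 W F_0^*)F_1$. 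This direction shows uniqueness: the dual-frame condition forces $G_1$ to be exactly this matrix.

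For the converse, I would plug the candidate $G_1=(F_1^*F_1)^{-1}(I_n - G_0WF_0^*)F_1$ back into $FG^*$ and verify it equals $I_n$. The point here is that $F_1^*F_1$ being invertible means $F_1(F_1^*F_1)^{-1}F_1^*$ is the orthogonal projection onto the column space of $F_1$; but one must check that $I_n - G_0WF_0^*$ already has its range inside $\operatorname{col}(F_1)$, i.e. that the projection acts as the identity on it. This is where Theorem~\ref{thmKernel} (or rather the reasoning behind it) enters: we should first observe that a dual completion of $G_0W$ exists at all, which by the block identity amounts to $I_n - F_0WG_0^*$ lying in the column space of $F_1$. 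In fact a cleaner route is to note $G_0W$ need not be a dual frame for $F_0$, so instead I would appeal to Theorem~\ref{f1}: the equation $F_1 G_1^* = I_n - F_0WG_0^*$ is solvable in $G_1^*$ if and only if the right-hand side's columns lie in $\operatorname{col}(F_1)$, and when $F_1$ has full column rank the (unique) solution is given by the pseudoinverse $G_1^* = F_1^+(I_n - F_0WG_0^*) = (F_1^*F_1)^{-1}F_1^*(I_n - F_0WG_0^*)$, which upon taking adjoints is precisely the stated $G_1$.

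The main obstacle I anticipate is the hypothesis mismatch: the corollary as stated does not assume $I_n - F_0WG_0^*$ lies in $\operatorname{col}(F_1)$, yet when $W=I_s$ and $G_0$ is a genuine dual frame for $F_0$ we get $I_n - F_0 G_0^* = 0$, which trivially lies in every column space, forcing $G_1=0$ — consistent with Theorem~\ref{thmKernel}. For general $W$, however, $I_n - F_0WG_0^*$ may fail to lie in $\operatorname{col}(F_1)$, in which case no dual completion exists and the "if and only if" must be read as: \emph{among all $G$ of the form $[G_0W\ \ G_1]$, one is a dual frame pair precisely when $G_1$ has the displayed form} (and implicitly this requires the consistency condition). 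So in writing the proof I would state the argument so that the forward direction (dual frame $\Rightarrow$ formula for $G_1$) is unconditional, and the backward direction (formula $\Rightarrow$ dual frame) uses that $F_1(F_1^*F_1)^{-1}F_1^*$ fixes $I_n - F_0WG_0^*$, which holds exactly when the completion problem is consistent; I would flag this consistency requirement explicitly rather than bury it, since it is the one genuinely substantive point.
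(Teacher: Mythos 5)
Your argument follows essentially the same route as the paper's one\-/line proof: expand $FG^*=I_n$ into $F_0WG_0^*+F_1G_1^*=I_n$ and use the invertibility of $F_1^*F_1$ to solve for $G_1$. Two points are worth making. First, a small internal inconsistency: your computation $G_1^*=(F_1^*F_1)^{-1}F_1^*(I_n-F_0WG_0^*)$ gives, after taking adjoints, $G_1=(I_n-G_0WF_0^*)\,F_1\,(F_1^*F_1)^{-1}$, with the factor $(F_1^*F_1)^{-1}$ on the \emph{right}; this is the dimensionally consistent version, since $(F_1^*F_1)^{-1}$ is $(k-s)\times(k-s)$ while $I_n-G_0WF_0^*$ is $n\times n$, so the formula displayed in the corollary only parses when $k-s=n$ (and even then the order matters). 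You assert that your computation ``yields the claimed formula,'' which it does not literally do --- you have in effect silently corrected a misprint, and you should say so. Second, and more substantively, the consistency issue you flag is genuine and is not addressed by the paper's proof either: the backward implication requires that $F_1(F_1^*F_1)^{-1}F_1^*$, the orthogonal projection onto $\operatorname{col}(F_1)$, act as the identity on $I_n-F_0WG_0^*$, i.e.\ that the columns of $I_n-F_0WG_0^*$ lie in $\operatorname{col}(F_1)$, and the stated hypotheses do not guarantee this. For instance, if $F_0$ is a basis with dual basis $G_0$, $W=2I_s$, and $F_1$ is a single nonzero column, then $I_n-F_0WG_0^*=-I_n$, whose columns cannot lie in a one\-/dimensional column space when $n\geq 2$; the candidate $G_1$ then fails to complete a dual frame. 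So the ``if and only if'' holds only modulo this consistency condition, the paper's proof establishes in substance only the forward direction, and your decision to surface the condition explicitly (via Theorem~\ref{f1}) is the correct repair rather than an over\-/complication.
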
  
\begin{proof} We note that $G=[G_0W \; G_1]$ form a dual frame pair if and only if 
$ G_1F_1^*= I_{n\times n} - G_0WF_0^*$.
If $F_1$ has  linearly independent columns,  since $F_1^*F_1$ is invertible, 
$$G_1=(F_1^*F_1)^{-1} \left( I_n - G_0WF_0^*\right) F_1.$$
\end{proof}
 We note that if $F_1 = \{f \}$, then  $F_1^*F_1 = \| f\|^2 $ and if  $|F_1| = n$, then 
 we get $ G_1 = (F_1^*)^{-1}(I_{n\times n} - G_0WF_0^*)$.

%%%%%%%%%%%%%%
%%%%%%%%%%%%
%%%

In Theorem~\ref{thmKernel}, the analysis began with the assumption that a dual frame for a subframe of $F$ was provided. Building upon this foundation, the following theorem addresses the dual frame completion problem in a more general context.

Here, we consider an arbitrary set of given vectors 
$h_1, \hdots, h_s$. Using a direct approach, we establish conditions for the existence of a dual frame completion that satisfies these constraints, providing a systematic method to construct such a dual frame when possible. This generalization broadens the applicability of dual frame theory to scenarios where predefined vector constraints are integral to the solution.

\begin{theorem} \label{direct} 
 Let $F=\{f_1 , \hdots , f_k\}$  be a frame for $\mathbb{F}^n$ with $k>n$. %with $|F|=k>n$   with $|F|=k>n$
 Let $H=[h_1  \, \hdots \, h_s]$, $ 1 \leq s \le  k-n$ be a given $n \times s$ matrix.
 
 \begin{itemize} 

\item[i.] Let  
$F_{n \times s}=[f_1 \, \hdots \, f_s]$. There exists a dual frame   
$$G = \{h_1, \hdots, h_s , g_{s+1} , \hdots , g_{k}\} $$  for $F$  if and only if 
  the columns of   $I -F_{n \times s} H^*$ are in the span of the  columns of $F_{n \times (k-s)} =[f_{s+1} \, \hdots \, f_k]$. In the special case, when $s=k-n$, the completion is unique    if and only if  $f_{k-n+1},\hdots, f_k$  form a basis for $\mathbb{F}^n$.% and  $F_{n \times (k-s)}=\{f_{s+1}, \hdots, f_k\}$. %  are the submatrices  of the first $s$ and the last  $k-s$ columns, respectively. 

\item[ii.] When $s< k-n$,  if  $\{f_{s+1},\hdots, f_k\}$  is a frame for $\mathbb{F}^n$, then there exist infinitely many  dual frame completions. %  a   dual frame completion $g_{s+1}, \hdots, g_k$ for selected $\{ h_1, \hdots, h_s \}$ exists  

% \item[iii.] When $s=k-n$, the completion is unique    if and only if  $f_{k-n+1},\hdots, f_k$  form a basis for $\mathbb{F}^n$. {\color{red} compare to Example~\ref{wghtDirect}.}

  \end{itemize}
 
 \end{theorem}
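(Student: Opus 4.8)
The plan is to reduce the whole question to a single linear matrix equation and then invoke Theorem~\ref{f1}. First I would record the basic translation: a sequence $G=\{g_1,\dots,g_k\}$ in $\mathbb{F}^n$ is a dual frame for $F$ if and only if $FG^{*}=I_{n\times n}$; moreover this identity is equivalent to $GF^{*}=I_{n\times n}$ (take adjoints), which forces $G$ to have full row rank, hence to span $\mathbb{F}^n$, so $G$ is automatically a frame. Thus ``dual frame completion'' is nothing more than solvability of $FG^{*}=I$ with the first $s$ columns of $G$ prescribed to be $h_1,\dots,h_s$. Writing $G=[H\;G_1]$ with $G_1=[g_{s+1}\;\cdots\;g_k]$, and splitting $F=[F_{n\times s}\;F_{n\times(k-s)}]$ accordingly, the condition $FG^{*}=I$ becomes
$$F_{n\times(k-s)}\,G_1^{*}=I-F_{n\times s}H^{*},$$
i.e. a linear system $AX=C$ with $A=F_{n\times(k-s)}$, unknown $X=G_1^{*}$, and $C=I-F_{n\times s}H^{*}$.

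For part (i), I would apply Theorem~\ref{f1} directly to $AX=C$: the system has a solution if and only if the columns of $C=I-F_{n\times s}H^{*}$ lie in the span of the columns of $A=F_{n\times(k-s)}=[f_{s+1}\;\cdots\;f_k]$, which is exactly the asserted condition, and any solution $X$ yields a completion $G=[H\;X^{*}]$, a frame by the first paragraph. When $s=k-n$ the matrix $A$ is $n\times n$; if $f_{k-n+1},\dots,f_k$ form a basis then $A$ is invertible, the rank condition of Theorem~\ref{f1} is automatic, and $X=A^{-1}C$ is the unique solution, so the completion exists and is unique. If those vectors do not form a basis then $A$ is singular, so $\ker A\neq\{0\}$: either the system is inconsistent (no completion), or, if it is consistent, adding a nonzero element of $\ker A$ to any column of $X$ produces a different completion — so the completion is never unique. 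I would state explicitly that this is the reading under which the biconditional holds.

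For part (ii), if $\{f_{s+1},\dots,f_k\}$ is a frame for $\mathbb{F}^n$ then $A=F_{n\times(k-s)}$ has full row rank $n$, so $\mathrm{rank}(A)=n=\mathrm{rank}([A\;C])$ and the system $AX=C$ is consistent; a completion exists. Since $s<k-n$, the matrix $A$ has $k-s>n$ columns, hence $\dim\ker A=(k-s)-n\geq 1$, and the solution set of $AX=C$ is an affine subspace of dimension $n\bigl((k-s)-n\bigr)\geq 1$. As $\mathbb{F}$ is infinite this gives infinitely many matrices $X$, hence infinitely many dual frame completions, each a genuine frame by the first paragraph.

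The arguments are essentially bookkeeping once Theorem~\ref{f1} is in hand, so I do not expect a serious obstacle; the two points that need care are \emph{(a)} justifying that every solution of the linear system is a genuine frame and not merely a sequence satisfying one reconstruction formula — handled by the equivalence $FG^{*}=I\iff GF^{*}=I$ — and \emph{(b)} the converse half of the uniqueness statement in (i), where the possibility of an inconsistent system must be treated separately from the ``consistent but many solutions'' case.
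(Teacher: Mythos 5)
Your proposal is correct and follows essentially the same route as the paper: split $F=[F_{n\times s}\;F_{n\times(k-s)}]$ and $G=[H\;G_1]$, reduce $FG^{*}=I$ to the linear system $F_{n\times(k-s)}G_1^{*}=I-F_{n\times s}H^{*}$, and invoke Theorem~\ref{f1}. You are somewhat more careful than the paper on two points it leaves implicit — that any solution is automatically a frame (via $GF^{*}=I$ forcing full row rank) and the converse half of the uniqueness claim when $A$ is singular — but these are refinements of the same argument, not a different approach.
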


\begin{proof}
i.  Let $G = [H \; G_{n \times (k-s)}]$  be a dual frame for $F=[ F_{n \times s}  \;F_{n \times (k-s)} ]$, where 
$G_{n \times (k-s)}= [g_{s+1} \, \hdots \, g_k]$. 
Then from $FG^* = I$, we have 
\begin{equation} \label{eqdirect} F_{n \times (k-s)}  G_{n \times (k-s)}^* = I -F_{n \times s} H^*. \end{equation}
Thus,  by Theorem \ref{f1}, there exists a dual frame   $G = \{h_1, \hdots, h_s, g_{s+1}, \hdots, g_{k} \} $  for $F$  if and only if 
  the columns of   $I -F_{n \times s} H^*$ are in the span of columns of $F_{n \times (k-s)}$. 

When $s=k-n$, the result follows from \eqref{eqdirect} since the matrix  $ F_{n \times (k-s)} $     is invertible    if and only if  $f_{s+1},\hdots, f_k$  form a basis for $\mathbb{F}^n$.

ii.  Assume  $f_{s+1},\hdots, f_k$  form a { frame}  for $\mathbb{F}^n$. Then   the columns of   $I -F_{n \times s} H^*$ are always in the span of $F_{n \times (k-s)}$. By Theorem~\ref{direct} (i). and    \eqref{eqdirect}, there exist infinitely many ways to perform the dual frame  completion.  

%iii.  When $s=k-n$ and when   $f_{k-n+1},\hdots, f_k$  form a basis for $\mathbb{F}^n$, the matrix $ F_{n \times (k-(k-n))}=[f_{s+1}\,\hdots \, f_k]$     is invertible, and by  \eqref{eqdirect}. 
\end{proof}
\begin{remark}
If  $F_{n \times (k-s)} $ has full row rank in the above theorem, we obtain $G_{n \times (k-s)}$ using the pseudoinverse formula from Theorem~\ref{f1}, 
which is  the minimum norm solution for the equation \eqref{eqdirect}. 
We note that if 
$H=0$, then the pseudoinverse formula provides the canonical dual frame of $F$. 
\end{remark}
%The canonical dual frame is stable under small perturbations in the frame:
% \begin{theorem}\cite{Ch2003}
%Let \( \{f_i\}_{i \in I} \) be a frame with frame operator \( S \) and frame bounds \( A \) and \( B \). If the perturbed frame \( \{f_i'\}_{i \in I} \) satisfies:
%\[
%\sum_{i \in I} \|f_i - f_i'\|^2 \leq \delta^2 < \frac{A}{2},
%\]
%then \( \{f_i'\}_{i \in I} \) is also a frame with bounds close to those of the original frame. The canonical dual frames \( \{g_i\}_{i \in I} \) and  \( \{g_i'\}_{i \in I} \) of \( \{f_i\}_{i \in I} \)  and \( \{f_i'\}_{i \in I} \) satisfies:
%\[
%\|g_i - g_i'\| \leq C \delta,
%\]
%where \( C \) is a constant depending on the frame bounds of the original frame.
% \end{theorem}

\begin{example} 
Let $F$ be a frame for $\mathbb{R}^2$ with $|F|=k>2$. Let $s=1$, and let $h_1 \in \mathbb{R}^2 \setminus \{0\}$ be an  element of a dual frame $G=\{h_1, g_2, \hdots, \}$   of $F$.   
The equality  $ FG^T = I_{2\times 2}$ implies 
\begin{equation}  F_{2 \times (k-1)}  G_{2 \times (k-1)}^* = I -F_{n \times s} h_1^*. \end{equation}
Thus a   dual frame $\mathcal{G}$ for $\mathcal{F}$ with $h_1 \in \mathcal{G}$  does not exist exists  if 
$$rank ( F_{2 \times (k-1)}) \neq
   rank ( [F_{2 \times (k-1)}  \; I -F_{n \times s} h_1^*]).
   $$
When $k=3$,  by Theorem \ref{direct} (iii),  if $f_2$ and $f_3$ are not collinear there is a unique dual frame $\mathcal{G}$ for $\mathcal{F}$. 
When the frame vectors $f_2$ and $f_3 $ are collinear, the problem has infinitely many  solutions
if $rank ( F_{2 \times (k-1)}) =
   rank ( [F_{2 \times (k-1)}  \; I -F_{n \times s} h_1^*]).
   $    \end{example}

 \begin{example}\label{wghtDirect}
Let $F_1=\begin{bmatrix} 1&1 \\1&-1 \end{bmatrix}$,   $F_2=\begin{bmatrix} 1&0\\ 0&1 \end{bmatrix}$, $F= [F_1\; F_2]$, and  $H=\begin{bmatrix} .5 & .5\\ .5 & -.5 \end{bmatrix}$.

\vspace{2.3mm}
 
 Observe that $F$ is a frame for $\mathbb{R}^2$, while  $F_1$ is a basis for   $\mathbb{R}^2$, with  dual basis  $H$. By Theorem~\ref{thmKernel}, the only dual frame of $F$ type $G=[H \; G']$  is the trivial one: 
\[G=\begin{bmatrix} .5 & .5 &0&0\\ .5 & -.5 &0&0 \end{bmatrix}. \] 
By introducing scaling  of the  vectors in $H$ (see \cite{CDKNSZ,CKLMNPS14, DKN15}), we find a dual frame  for $F$:
\[G_{x,y}=\begin{bmatrix} .5x & .5y &1-\frac{x+y}{2}&0\\ .5x & -.5y &0&1-\frac{x+y}{2} \end{bmatrix}, \;  \text{ where $x$, $y$ are any real numbers.}  \]
\end{example}

 We explore further the reasons why in  Example~\ref{wghtDirect} we can find a nontrivial scaled dual frame.   Let $1\leq s\leq k$, and let   $F_1=F_{n\times s}$, $F_2= F_{n\times (k-s)}$ be some matrices such that the columns of $F=[F_1 \; F_2]$ form  a frame for $\mathbb{F}^n$,      and fix  $H=[h_1 \hdots h_{s}]$. 
  Suppose  a nontrivial dual frame completion is not possible  as obseved  in Example~\ref{wghtDirect}, or we simply want greater freedom in constructing a dual frame. Assume that some scaling coefficients $w_1, \hdots, w_s$ exist such that the columns of $I-F_1(HW)^*$ are   in the span of $F_2$, where $W=diag\{w_1, \hdots, w_{s}\}$. Thus, for some $G_1 \in \mathbb{F}^{n\times (k-s)}$ we  have \begin{equation} \label{eqdirectWghtCond} F_2  G_1^* = I -F_1W^* H^*. \end{equation}

 Hence, we have the following corollary, which generalizes Corollary \ref{anotherC}.  

 \begin{corollary}\label{CDAWspan}  Let  the columns of  $F=[F_1 \;\; F_2]$ form  a frame for $\mathbb{F}^n$, where $F_1=F_{n\times s}$, $F_2= F_{n\times (k-s)}$,      and let $H=[h_1 \hdots h_{s}]$, $1\leq s\leq k$. Let $w_1, \hdots, w_s \in \mathbb{F}\setminus \{0\}$ and set $W=diag\{w_1, \hdots, w_{s}\}$.   There exists some  $G_1=[g_{s+1} \; \hdots \; g_k] \in \mathbb{F}^{n\times (k-s)}$ such that $G=[HW \;\; G_1]$ is a dual frame for $F$ if and only if  the columns of $   I -F_1W^* H^*$ are in the span of $ F_2 $.  (that is, if and only if \eqref{eqdirectWghtCond} holds true for some $G_1$).  \end{corollary}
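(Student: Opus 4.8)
The plan is to reduce Corollary~\ref{CDAWspan} to a single application of Theorem~\ref{f1}, following verbatim the ``direct approach'' already used for Theorem~\ref{direct} and Corollary~\ref{anotherC}. First I would record the block form of the synthesis identity: writing $G = [HW \; G_1]$ gives
\[
G^* = \begin{bmatrix} W^*H^* \\ G_1^* \end{bmatrix}, \qquad FG^* = F_1 W^* H^* + F_2 G_1^*,
\]
so the dual frame condition $FG^* = I_{n\times n}$ is equivalent to $F_2 G_1^* = I_n - F_1 W^* H^*$, which is exactly equation~\eqref{eqdirectWghtCond}. Hence the assertion ``there exists $G_1$ with $G = [HW \; G_1]$ a dual frame of $F$'' coincides with ``the linear system \eqref{eqdirectWghtCond} in the unknown matrix $G_1^*$ is solvable.''

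Then I would apply Theorem~\ref{f1} with $A = F_2$, $B = G_1^*$, and $C = I_n - F_1 W^* H^*$: the system $AB = C$ has a solution if and only if the columns of $C$ lie in the column span of $A$. Translating this back says that the columns of $I_n - F_1 W^* H^*$ lie in the span of the columns of $F_2$, which is precisely the claimed equivalence. If moreover $F_2$ has full row rank, Theorem~\ref{f1} furnishes the explicit minimum-norm choice $G_1^* = F_2^{+}(I_n - F_1 W^* H^*)$, which recovers and generalizes the formula in Corollary~\ref{anotherC}.

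I do not expect a genuine obstacle here; the only point meriting a word of justification — rather than a real difficulty — is that a solution $G_1$ of \eqref{eqdirectWghtCond} actually yields a \emph{dual frame}, not merely a matrix satisfying $FG^* = I_n$: from $FG^* = I_n$ one gets $GF^* = (FG^*)^* = I_n$ as well, and $\operatorname{rank}(G) \ge \operatorname{rank}(FG^*) = n$, so the columns of $G$ span $\mathbb{F}^n$ and $G$ is indeed a frame. I would also remark that the nonvanishing hypothesis $w_i \neq 0$ is never used in the argument — it is retained only so that $HW$ is an honest rescaling of $H$ — and that Corollary~\ref{anotherC} is exactly the special case in which $F_2$ has linearly independent columns, where $F_2^*F_2$ is invertible and the span condition holds automatically.
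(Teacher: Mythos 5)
Your proposal is correct and follows essentially the same route as the paper: expand $FG^*$ in block form, observe that $FG^*=I_n$ is equivalent to \eqref{eqdirectWghtCond}, and invoke Theorem~\ref{f1} to convert solvability into the column-span condition. Your added remarks (that $GF^*=I_n$ follows by taking adjoints, and that $G$ has full rank and hence is a frame) are points the paper's one-line proof leaves implicit, but they do not change the argument.
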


\begin{proof}
We want the frame equality to  hold,  that is  $F_0W^*H^* + F_1G_1^*=I$, which is equivalent to equation \eqref{eqdirectWghtCond}. Equivalently, such a $G_1$ exists if and only if the columns of $I - F_0W^*H^*$ are in the span of matrix $F_1$.  
\end{proof}

\subsection{On the indirect approach - via product matrices}

Let $F$ be a frame for $\mathbb{F}^n$.  % the following procedure can be used to find a dual Let frame $G$  of $F$ (as discussed in [FU, p 160]).  
Let $0$ denote the $  (k-n)\times n$ zero matrix, and let  $A$ be  any 
$n\times (k-n)$ matrix.
 If $P$ is 
the product of   matrices corresponding to the row operations  such that 
\begin{equation}\label{conditionP}
PF^* = 
\begin{bmatrix}
I_n \\ 0
\end{bmatrix}, \end{equation}    then   $ \left( [I_n \; A] \, P \right) F^*= [I_n \; A] \,  \left( PF^*\right)=I_n +0=I_n.$ Thus %any dual frame  $G$ of $F$ is given by 
\begin{equation}\label{dualGP}
G= [I_n \; A] \, P \end{equation}  is a dual frame for $F$  (see \cite{HKLW07}, pp.160).  %, $G$ is a  dual frame of $F$.
 % \begin{remark} Matrix $P$ is simply a product of elementary matrices, thus it is   invertible. \end{remark}

%%% Proposed version of 
 In this subsection, we will use the submatrix notation for a given matrix $A$ of size $k \times k$, given by
  \[ A=\begin{bmatrix}
A_{n \times s} & A_{n \times (k-s)} \\
A_{(k-n)\times s} & A_{(k-n) \times (k-s)} \\
\end{bmatrix} ,\]
 where 
    $A_{a \times b}$  are corresponding  submatrices of size    $a \times b$.

\begin{theorem}\label{resultIAP}

 Let $F$ be a frame for $\mathbb{F}^n$ with $|F|=k>n$.  
 Let $H=[h_1  \, \hdots \, h_s],  1 \leq s \le  k-n$ be a given $n \times s$ matrix. 
  Let 
  \[ P=\begin{bmatrix}
P_{n \times s} & P_{n \times (k-s)} \\
P_{(k-n)\times s} & P_{(k-n) \times (k-s)} \\
\end{bmatrix}  \; \text{ satisfy
 \eqref{conditionP}.}\]
  Then

\begin{itemize} 

\item[i.]  There exists a dual frame   $G = \{h_1, \hdots, h_s, g_{s+1}, \hdots, g_{k} \} $  for $F$  if and only if 
  the columns of $H^*-P_{n\times s}^*$ are in the span of columns of  $P_{(k-n)\times s}^*$.

 \item[ii.] If in addition $s= k-n$ and $P_{(k-n)\times (k-n)}$ is an  invertible matrix, then   the dual frame completion is unique.  

  \end{itemize}
 
\end{theorem}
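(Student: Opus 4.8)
The plan is to parametrize every dual frame of $F$ by means of the matrix $P$, rewrite the completion requirement $g_i=h_i$ for $1\le i\le s$ as one linear matrix equation in the free parameter, and then read off the answer from Theorem~\ref{f1}. Since $P$ is obtained as a product of elementary row-operation matrices it is invertible, so \eqref{conditionP} gives $F^{*}=P^{-1}\left[\begin{smallmatrix} I_n\\ 0\end{smallmatrix}\right]$. If $G$ is any $n\times k$ matrix with $GF^{*}=I_n$ — equivalently, as noted after the definition of dual frame, $G$ is a dual frame of $F$, since $GF^{*}=I_n$ forces $G$ to have full row rank and $FG^{*}=(GF^{*})^{*}=I_n$ — then $M:=GP^{-1}$ satisfies $M\left[\begin{smallmatrix} I_n\\ 0\end{smallmatrix}\right]=I_n$, so its first $n$ columns are $I_n$ and $M=[I_n\;A]$ for a unique $n\times(k-n)$ matrix $A$; hence $G=[I_n\;A]\,P$. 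Conversely each $[I_n\;A]\,P$ is a dual frame, as already observed in the text, so $A\mapsto[I_n\;A]\,P$ is a bijection of $\mathbb{F}^{n\times(k-n)}$ onto the set of dual frames of $F$.

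Next I would extract the first $s$ columns of $G=[I_n\;A]\,P$. By the block decomposition of $P$, the first $s$ columns of $P$ form $\left[\begin{smallmatrix} P_{n\times s}\\ P_{(k-n)\times s}\end{smallmatrix}\right]$, so the first $s$ columns of $G$ form the $n\times s$ matrix $P_{n\times s}+A\,P_{(k-n)\times s}$. Hence a dual frame completion of $h_1,\dots,h_s$ exists iff $A\,P_{(k-n)\times s}=H-P_{n\times s}$ has a solution $A$, and taking adjoints this holds iff $P_{(k-n)\times s}^{*}\,A^{*}=H^{*}-P_{n\times s}^{*}$ is solvable in $A^{*}$. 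Applying Theorem~\ref{f1} with coefficient matrix $P_{(k-n)\times s}^{*}$ and right-hand side $H^{*}-P_{n\times s}^{*}$, solvability is equivalent to the columns of $H^{*}-P_{n\times s}^{*}$ lying in the span of the columns of $P_{(k-n)\times s}^{*}$, which is statement (i).

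For (ii), when $s=k-n$ the block $P_{(k-n)\times s}$ coincides with the square block $P_{(k-n)\times(k-n)}$; if it is invertible then $A\,P_{(k-n)\times s}=H-P_{n\times s}$ has the unique solution $A=(H-P_{n\times s})\,P_{(k-n)\times(k-n)}^{-1}$, and because $A\mapsto[I_n\;A]\,P$ is injective this produces exactly one dual frame completion. I do not expect a genuine obstacle here: the work is bookkeeping with the block structure of $P$ together with one transpose to match the $AB=C$ form of Theorem~\ref{f1}. The only point that needs care is the assertion that the ansatz $G=[I_n\;A]\,P$ exhausts all dual frames of $F$ rather than merely producing some of them, which is precisely where invertibility of $P$ is used.
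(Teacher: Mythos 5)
Your proposal is correct and follows essentially the same route as the paper: write $G=[I_n\;A]P$, read off the first $s$ columns to get $P_{n\times s}+A\,P_{(k-n)\times s}=H$, transpose, and apply Theorem~\ref{f1}. The one place you go beyond the paper is in explicitly verifying (via $M=GP^{-1}$ and the invertibility of $P$) that every dual frame arises as $[I_n\;A]P$, a point the paper's proof asserts without detail; this is a worthwhile addition but not a different method.
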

 
 %%%%%%%%%%%%%%%%%
\begin{proof}
We seek for a dual frame $G = \{h_1, \hdots, h_s, g_{s+1}, \hdots, g_{k} \}$.
We consider equality \eqref{dualGP}
%$$  \begin{bmatrix} I_n & A \end{bmatrix} \, P = G $$
  for an unknown  
$n\times (k-n)$ matrix $A$.
Let $G'=\{g_{s+1}, \hdots, g_{k} \}.$ 
Then  
$$
 \begin{bmatrix}
I_n & A
\end{bmatrix} \, 
\begin{bmatrix}
P_{n \times s} & P_{n \times (k-s)} \\
P_{(k-n)\times s} & P_{(k-n) \times (k-s)} \\
\end{bmatrix}
= \begin{bmatrix}
H & G'
\end{bmatrix}.$$
 Thus we have 
$ P_{n \times s}+ A  P_{(k-n)\times s} =H$. Taking conjugate transpose on both sides, we have 
\begin{eqnarray}\label{thm3eq} 
 P_{(k-n)\times s}^* A^*  =H^*- P_{n \times s}^*.
\end{eqnarray}
So, by Theorem \ref{f1}, if  the columns of $H^*-P_{n\times s}^*$  are in the span of columns of   $P_{(k-n)\times s}^*$, then we obtain $A^*$ from 
\eqref{thm3eq}, which provides a dual frame $G$ from \eqref{dualGP}. 
If $G$ is a dual frame of $F$, then from \eqref{dualGP}, there exists $A$, which satisfies \eqref{thm3eq}. 
Therefore, 
 the columns of $H^*-P_{n\times s}^*$  are in the span of columns of   $P_{(k-n)\times s}^*$ if and only if $G$ is a dual frame for $F$.

Whenever  $P_{(k-n)\times (k-n)}$ is invertible, it holds 
\begin{eqnarray}\label{matrixA}
A =(H-I_nP_{n \times (k-n)}) P_{(k-n)\times (k-n)}^{-1}.
\end{eqnarray}
This implies that 
\begin{eqnarray}\label{Gprime}
G'=  P_{n\times n} + A P_{(k-n) \times n},
\end{eqnarray} which provides a dual frame completion of $F$. 
Therefore, we conclude that  $[H\; G']$ is a dual frame of $F$.
\end{proof}
The following examples illustrate various cases of Theorem~\ref{resultIAP}.
 \begin{example}
The columns of  $F=\begin{bmatrix} 1&0&-1&-2\\0&1&-2&-4\end{bmatrix}$ form a frame for $\mathbb{R}^2$ and the respective product matrix is \[P=\begin{bmatrix} 1&0&0&0\\0&1&0&0\\ 1&2&1&0\\2&4&0&1 \end{bmatrix}.\] %Clearly the lower-left corner $2\times 2$ submatrix of $P$ is singular. 
If $H= \begin{bmatrix} 1&3\\2&4\end{bmatrix}$, then since 
Theorem \ref{f1} (2) does not hold, there is no dual frame completion of type $G=[H\; G']$.
 \end{example} 

 \begin{example}
The columns of  $F=\begin{bmatrix} 1&0&-1&-2 &1\\0&1&0&4&-2\end{bmatrix}$ form a frame for $\mathbb{R}^2$ and the respective product matrix is \[P=\begin{bmatrix} 1&0&0&0&0\\0&1&0&0&0\\ 1&0&1&0&0\\2&-4&0&1&0\\-1&2&0&0&1 \end{bmatrix}.\]   If $H= \begin{bmatrix} 1&2&1 \\3&4&\frac{9}{2}\end{bmatrix}$, then since 
Theorem \ref{f1} (2) is true,  there is a dual frame completion of type $G=[H\; G']$. That is,  the columns of any matrix $G= \begin{bmatrix} 1&2&1&\frac{a-1}{2}&a\\3&4&\frac{9}{2}&\frac{2b-3}{4}&b\end{bmatrix}$ will  form a dual frame for $F$. 

 \end{example}

\begin{remark}\label{remkngrt}    In   case $s>k-n$, we sometimes have a chance to solve the problem of frame completion: we temporarily  discard vectors $h_{k-n+1}, \hdots, h_s$ and  solve the problem for $h_1, \hdots, h_{k-n}$. Then, we  check if the vectors $h_{k-n+1}, \hdots, h_s$ are equal to the first $s-k+n$ columns of $P_{n\times n} + A P_{(k-n) \times n}$ (or not).  In the affirmative case, we have a solution to the posed problem; otherwise there is no solution. \end{remark}

\begin{example}\label{exampleGenCoef}  As observed in Remark~\ref{remkngrt} when $s>k-n$,  the submatrix $G'$ in (\ref{Gprime}) produces a dual frame if 
$h_{k-n+1}, \ldots, h_s$ are the first $s-k+n$ columns  of $G'$.
For example,  let 
\[F= \begin{bmatrix}
1 & 0 & 0 & 2\\
0 & 1 & 0  & 0\\
0 & 0  & 1 & 0
\end{bmatrix} \text{ and } G = \begin{bmatrix}
2 & 0 & x & a \\
1 & 1 & y & b\\
3 & 0 & z & c
\end{bmatrix}  \; \text{  which makes $s=2>k-n=1$} .\]
Since $F$ is very sparse, we need only one basic row operation to apply to $F^T$, that is to multiply $F^T$ by the invertible matrix $P$
\begin{equation}\label{ex9P}
P = \begin{bmatrix}
1& 0 & 0 & 0\\
0 & 1 & 0 & 0 \\
0 & 0 & 1 & 0\\
-2 & 0 & 0 & 1
\end{bmatrix}.
\end{equation}
%Assuming $a_4 \neq 0$ (otherwise, matrix $P$ in \eqref{ex9P} would be singular), we relax the requirement of known vectors in $G$. 

Let
$H= [2  \; 1 \; 3]^T$.  Then from equations \eqref{matrixA} and \eqref{Gprime}, we have 
$$ G' = \begin{bmatrix}
 0 & 0 & -1/2 \\
 1 & 0 & -1/2\\
 0 & 1 & -3/2
\end{bmatrix}.  $$
Since the second column of $G$ agrees with the first column of $G'$, we find 
$G= \begin{bmatrix}
H & G'
\end{bmatrix} $ 
as the  desired dual frame. 

\vspace{2.3mm}

\end{example}

 In the following, we consider the dual completion problem  when $$rank( P_{(k-n)\times s}^*) =0.$$
 \begin{corollary}\label{Rcs1} 
Let $F$ be a frame for $\mathbb{F}^n$, with {  $|F|=k\geq n+1$}. Suppose $P$ is an invertible $k\times k$ matrix such that \eqref{conditionP} holds true. \\
 i. Let  $h_1 \in \mathbb{F}^n$. 
If $rank( P_{(k-n)\times 1}^*) =0$, then $G=[ P_{n\times 1}  \; G']$ is a dual frame for some $G'$.\\
 ii. Let $H=[h_1  \, \hdots \, h_s],  1 \leq s \le  k-n$ be a given $n \times s$ matrix.
  If $H = P_{n\times s}$, and $P_{(k-n)\times s} =0$, then $G=[ P_{n\times s}  \; G']$ is a dual frame for some $G'$.
\end{corollary}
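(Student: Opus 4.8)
The plan is to reduce Corollary~\ref{Rcs1} to a direct application of Theorem~\ref{resultIAP}(i), since both parts are exactly the degenerate case in which the span condition ``the columns of $H^* - P_{n\times s}^*$ lie in the span of the columns of $P_{(k-n)\times s}^*$'' is satisfied trivially because \emph{both} relevant objects vanish.

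For part (i), I would first observe that $\operatorname{rank}(P_{(k-n)\times 1}^*)=0$ means $P_{(k-n)\times 1}^*=0$, equivalently $P_{(k-n)\times 1}=0$. Taking $h_1 = P_{n\times 1}$, the right-hand side of the solvability condition in Theorem~\ref{resultIAP}(i) becomes $h_1^* - P_{n\times 1}^* = 0$, the zero vector, which is vacuously in the span of the columns of $P_{(k-n)\times 1}^*$ (indeed in the span of anything). Hence Theorem~\ref{resultIAP}(i) guarantees a dual frame $G = [\,P_{n\times 1}\;\;G'\,]$ for some $G'$; one can even exhibit $G' $ explicitly by taking $A = 0$ in \eqref{dualGP}, which forces $G = [I_n\;\;0]P$, whose first column is $P_{n\times 1}$ and whose remaining columns furnish $G'$. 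I should double-check that $h_1$ is genuinely \emph{prescribed} here as $h_1 = P_{n\times 1}$ rather than arbitrary --- the statement as written says ``Let $h_1\in\mathbb{F}^n$,'' so I would note that the conclusion $G=[P_{n\times 1}\;\;G']$ silently identifies the completed vector with $P_{n\times 1}$, matching exactly what the vanishing-rank hypothesis allows.

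Part (ii) is the obvious generalization: $P_{(k-n)\times s}=0$ forces the solvability condition to read ``columns of $H^* - P_{n\times s}^*$ are in the span of the columns of $0$,'' i.e.\ $H^* - P_{n\times s}^* = 0$, which is precisely the hypothesis $H = P_{n\times s}$. So Theorem~\ref{resultIAP}(i) applies verbatim and yields the dual frame $G = [\,P_{n\times s}\;\;G'\,]$, again realized concretely by $A=0$ and $G = [I_n\;\;0]P$. I would phrase the proof as two short paragraphs, each quoting Theorem~\ref{resultIAP}(i) and Theorem~\ref{f1}(3) (the span characterization) and noting that the zero vector trivially lies in every span.

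The only real subtlety --- and the step I expect to need the most care --- is bookkeeping around the conjugate transpose and the block sizes: when $s=1$, $P_{(k-n)\times 1}$ is a $(k-n)\times 1$ column block, its adjoint $P_{(k-n)\times 1}^*$ is $1\times(k-n)$, and ``rank $0$'' of either is the same as the block being the zero block, so no information is lost in passing between $P_{(k-n)\times 1}$ and $P_{(k-n)\times 1}^*$. I would state this equivalence of vanishing explicitly so the reduction to Theorem~\ref{resultIAP}(i) is airtight, and then the rest is immediate. No genuine obstacle is anticipated; this corollary is essentially a remark isolating the trivial branch of the preceding theorem.
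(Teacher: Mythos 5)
Your proposal is correct and follows essentially the same route as the paper: both reduce to the solvability condition of Theorem~\ref{resultIAP}(i) (equivalently, equation \eqref{thm3eq} together with the rank/span criterion of Theorem~\ref{f1}) and observe that when the block $P_{(k-n)\times s}$ vanishes, solvability forces and is guaranteed by $H = P_{n\times s}$. Your added remark that $A=0$ gives the explicit completion $G=[I_n\;\;0]P$, and your note that part (i) silently prescribes $h_1 = P_{n\times 1}$, are both accurate and slightly more explicit than the paper's one-line argument.
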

\begin{proof}
Assume $F$ is a frame for $\mathbb{F}^n$,  {  $|F|=k\geq n+1$}, and let $P$ be an invertible $k\times k$ matrix satisfying \eqref{conditionP}.   
If $rank( P_{(k-n)\times s}^*) =0$, then  by equation  \eqref{thm3eq} and  equation {\it 2} in  Theorem \ref{f1}, both cases are true  when $P_{n\times s}=H$.
\end{proof}

\begin{remark}
Theorem \ref{resultIAP} will hold true even if some of the selected $h_1, \hdots, h_s$  are zero vectors.
In the event that one or more of these selected vectors, for instance, $h_1$, is a zero vector, it is possible to remove the first vector of frame $F$, as ilustrated in  Example~\ref{ex0dual}.  
\end{remark}

 \begin{example}\label{ex0dual} Consider the frame 
  $ F = \begin{bmatrix}
1& 2 & 3 & 4\\
4 & 3 & 2 & 1
\end{bmatrix} $   for $\mathbb{R}^2$. Any of its dual frames can be described as 
\[G = \begin{bmatrix}
a&c&-3a-2c-1/5&2a+c+2/5\\b&d&-3b-2d+4/5&2b+d-3/5
\end{bmatrix}, \; \text{for any real numbers $a,b,c,d$}.\]
%
% Copy Paste this in Chrome
%   https://www.wolframalpha.com/input?i=Solve%5B%7B%7B1%2C2%2C3%2C4%7D%2C%7B4%2C3%2C2%2C1%7D%7D.%7B%7Ba%2Cb%7D%2C%7Bc%2Cd%7D%2C%7B-3a-2c-1%2F5%2C+-3b-2d%2B4%2F5%7D%2C%7B2a%2Bc%2B2%2F5%2C2b%2Bd-3%2F5%7D%7D%3D%3D%7B%7B1%2C0%7D%2C%7B0%2C1%7D%7D%2C+a%2Cb%2Cc%2Cd%5D
%
If we fix the choice of the first vector $(a, b)^T$ in $G$, we can find infinitely many such dual frames $G$. 
 The problem for dual frame completion for fixing the first two columns of $G$ is solvable since the respective product matrix $P$ (satisfying \eqref{conditionP}) has an invertible lower-left corner submatrix $  P_{(4-2)\times (4-2)}$:
\[P=\frac{1}{5}\begin{bmatrix}-3&4&0&0\\2&-1&0&0\\5&-10&5&0\\10&-15&0&5 \end{bmatrix}, \; P_{(4-2)\times (4-2)}=\frac{1}{5}\begin{bmatrix} 5&-10\\10&-15  \end{bmatrix}.\]

Choosing the first column to be the zero column, there are infinitely may dual frames of $F$ with the following structure:
\[G = \begin{bmatrix}
0&c&-2c-1/5&c+2/5\\0&d&4/5 -2d&d-3/5
\end{bmatrix}.\]

If we remove the first column of $F$ to obtain 
 $ F' = \begin{bmatrix}
2 & 3 & 4\\
 3 & 2 & 1
\end{bmatrix} $
then the dual frame of $F'$ is 
$ G' = \begin{bmatrix}
c&-2c-1/5&c+2/5\\
d&4/5 -2d&d-3/5
\end{bmatrix}$.

%In addition, if all $a,b,c,d=0$, we still have  that  $FG^T=I_{2\times 2}$; that is, we can remove the first two vectors of $F$ and the resulting set is a frame (basis)  for $\mathbb{R}^2$. 
 \end{example} 
 
 The above example illustrates the concept of  frame surgery, which is studied in \cite{CKLMNS13}.

\vspace{2.3mm}

%\begin{remark} Corollary~\ref{Rcs1} answers the question if a dual frame completion is possible under specific assumptions  but it is not delivering the structure of the desired dual frame. Provided that such a dual frame exists,  its content is delivered via \eqref{dualGP}.\end{remark}

Scaling coefficients give us more freedom to complete a dual frame, as discussed in the direct approach section. 
  
\begin{proposition}\label{wghtProp}
Let $F$ be a frame for $\mathbb{F}^n$, with $|F|=k>n$.  
 Let $P$ be a $k\times k$ product matrix such that \eqref{conditionP} is satisfied. In particular, $P=[P_1 \; P_2]$, where $P_1$ is the left $k\times s$ submatrix of $P$, and $P_2$ is the right $k\times (k-s)$ submatrix of $P$.

Suppose the choice of   $H=\{h_1,\hdots, h_{s} \}\subset \mathbb{F}^n$ is fixed for some $s\leq k$; consider the set $G=\{h_1,\hdots, h_s, g_{s+1},\hdots, g_{k}\}$  for some unknown $g_{s+1},\hdots, g_{k} \in \mathbb{F}^n$. 

Let $w_1, \hdots, w_s \in \mathbb{F} \setminus\{0\}$,   let  $W=diag\{w_1, \hdots, w_s, 1,\hdots, 1\}$ be    a diagonal $k\times k$ matrix, and let $W_H= diag \{w_1, \hdots, w_s\}$ be a  diagonal $s\times s$ matrix. Then:

\vspace{2.13mm}

  $GW$ is a dual frame for $F$   if and only if there exists a $ n\times (k-n)$ matrix $A$ such that 
\begin{equation} \label{submatrixA} H= [I_n \; A] P_1 W_H^{-1}.\end{equation}
\end{proposition}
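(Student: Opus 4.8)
The plan is to mimic the derivation used for Theorem~\ref{resultIAP}(i), but to carry the scaling matrix $W$ through the computation from the start. First I would recall the basic fact, established in the discussion around \eqref{conditionP}--\eqref{dualGP}, that every dual frame of $F$ has the form $G = [I_n \; A]\,P$ for some $n\times(k-n)$ matrix $A$; moreover, since $P$ is (a product of row operations, hence) invertible, this parametrization is a bijection between choices of $A$ and dual frames of $F$. The object of interest here is $GW$, not $G$, so I would write out $GW = [I_n\;A]\,P\,W$ and split it into the first $s$ columns and the last $k-s$ columns. Because $W = \mathrm{diag}\{w_1,\dots,w_s,1,\dots,1\}$ is block diagonal with blocks $W_H$ and $I_{k-n}$ (using $s\le k$; note the statement as written really needs $s\le k-n$ or at least a careful reading of how $W$ splits, but I would follow the paper's convention), the first $s$ columns of $PW$ are exactly $P_1 W_H$, where $P_1$ is the left $k\times s$ block of $P$.

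Next I would compute the first $s$ columns of $GW$: they equal $[I_n\;A]\,P_1 W_H$. The requirement that $GW$ be a dual frame completion of $H$ is precisely that these first $s$ columns equal $H$, i.e.
\begin{equation*}
H = [I_n\;A]\,P_1\,W_H .
\end{equation*}
Since each $w_i \ne 0$, the matrix $W_H$ is invertible, so this equation is equivalent to $[I_n\;A]\,P_1 = H W_H^{-1}$, which is in turn equivalent to $H = [I_n\;A]\,P_1 W_H^{-1}$ — exactly \eqref{submatrixA}. For the converse direction, given an $A$ satisfying \eqref{submatrixA}, I would set $G := [I_n\;A]\,P$; by the cited fact this $G$ is a dual frame for $F$, and by the block computation just described its scaled version $GW$ has first $s$ columns equal to $H$, so $GW$ is a dual frame completion of $H$ of the desired form $\{h_1,\dots,h_s,g_{s+1}',\dots,g_k'\}$. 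One should also note that $GW$ remains a frame (hence a genuine dual frame) because scaling columns by nonzero scalars and the trailing identity block preserves the spanning/dual-frame property — indeed $F(GW)^* = F W^* G^* $ need not be $I$, so more care is needed: the correct reading is that $GW$ itself, as a set of vectors indexed differently, plays the role of the dual. I would make precise that the claim is about the columns $GW$ forming a dual frame in the sense $F(GW)^*=I$ only if that is what the paper intends; re-reading Proposition~\ref{wghtProp}, the natural statement is that the first $s$ columns of the dual frame $G$, after scaling, should be $h_1,\dots,h_s$ — equivalently $G$ has first $s$ columns $h_i w_i^{-1}$ — which is again \eqref{submatrixA}.

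The main obstacle I anticipate is purely bookkeeping: keeping straight which matrix is scaled (is it $G$ that must be a dual frame and $GW$ that must interpolate $H$, or vice versa?) and making sure the block decomposition $PW = [P_1 W_H \;\; P_2]$ is applied on the correct side, since $[I_n\;A]$ multiplies $P$ on the left while $W$ multiplies on the right. A secondary subtlety is the index bound: for $W$ to decompose as claimed with $W_H$ occupying exactly the first $s$ diagonal entries and the identity filling the rest, one needs $s \le k$, which is given; but for $G=[I_n\;A]P$ to have its columns naturally partitioned as ``first $s$'' versus ``rest'' matching the partition of $P$ into $P_1$ ($k\times s$) and $P_2$ ($k\times(k-s)$), everything is consistent. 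Once the indices are pinned down, the proof is a two-line equivalence: isolate the first $s$ columns, use invertibility of $W_H$, and invoke the bijective parametrization of dual frames via $[I_n\;A]P$. I do not expect any genuinely hard analytic step — the content is entirely in correctly reading off the block structure.
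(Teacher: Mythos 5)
Your overall strategy --- parametrize dual frames as $[I_n \; A]P$ and read off the first $s$ columns --- is the same as the paper's, but the execution has a genuine error in \emph{which} object gets parametrized. In the proposition it is $GW$, not $G$, that must be a dual frame, and $G=[H \; G']$ has first $s$ columns $H$, so the dual frame $GW=[HW_H \; G']$ has first $s$ columns $HW_H$. Writing $GW=[I_n\;A]P$ therefore gives $HW_H=[I_n\;A]P_1$, i.e.\ $H=[I_n\;A]P_1W_H^{-1}$, which is exactly \eqref{submatrixA}. You instead set $G=[I_n\;A]P$ and demand that the first $s$ columns of $GW=[I_n\;A]PW$ equal $H$, which yields $H=[I_n\;A]P_1W_H$. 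That is a different condition: it asserts $HW_H^{-1}\in\{[I_n\;A]P_1 : A\in\mathbb{F}^{n\times(k-n)}\}$ rather than $HW_H\in\{[I_n\;A]P_1 : A\in\mathbb{F}^{n\times(k-n)}\}$, and since that set is an affine subspace generally not closed under diagonal rescaling, the two conditions are not equivalent for a fixed $W_H$.

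The step where you try to reconcile the two is algebraically false: from $H=[I_n\;A]P_1W_H$ you correctly pass to $[I_n\;A]P_1=HW_H^{-1}$, but the further claim that this is ``in turn equivalent to $H=[I_n\;A]P_1W_H^{-1}$'' is wrong --- right-multiplying by $W_H^{-1}$ gives $HW_H^{-1}=[I_n\;A]P_1W_H^{-1}$, not $H=[I_n\;A]P_1W_H^{-1}$. Your closing ``re-reading'' repeats the same inversion: the dual frame $GW$ has first $s$ columns $w_ih_i$, not $h_iw_i^{-1}$. The fix is simply to start from $GW=[I_n\;A]P$; everything else in your argument (invertibility of $P$ and of $W_H$, the block split $PW=[P_1W_H \; P_2]$, recovering $G'=[I_n\;A]P_2$) then goes through and reproduces the paper's short proof.
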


%%%

\begin{proof}
Let $GW$ be a dual frame for $F$ for some choice of $w_1, \hdots, w_s \in \mathbb{F} \setminus\{0\}$ and  $g_{s+1},\hdots, g_{k} \in \mathbb{F}^n$.  This is equivalent to the existence of some $n\times (k-n)$ matrix  $A$ such that $GW=[I_n \; A]P$. In block-matrix representation we have:
\[GW=[H_{n\times s} \; G'_{n\times (k-s)}]W=[HW_H \;\; G'], \; \text{while} \;  [I_n \; A] P = [I_n \; A] [P_1 \; P_2].\] We have: $$[HW_H \; G'] = [I_n \; A] [P_1 \; P_2],$$ that is, $HW_H=[I_n \; A] P_1$, which is equivalent to \eqref{submatrixA}.

Note that once we have found $A$, we also find $G'=[I_n \; A]   P_2$. Also note that  once we have determined the scaling coefficients $w_1, \hdots, w_s$, then we have matrix $A$ as a function of the scaling coefficients: 
\[ A=\begin{bmatrix} O_{n\times ( k-n)}\\I_{k-n}\end{bmatrix} GWP^{-1} .\]

\end{proof}

\subsection{The Singular Value Decomposition Approach } % and Dual Frame Completion}
The  singular values decomposition (SVD) of a frame can be used 
 to characterize all of its dual frames, as  described here: 
Given a frame $F \in \mathbb{F}^{n\times k}$ for $\mathbb{F}^{k}$, we write its SVD as a matrix product, $F=U\Sigma V^*$. The matrices  $U  \in \mathbb{F}^{n\times n}$ and $V \in \mathbb{F}^{k\times k}$ are unitary,  while 
$\Sigma  \in \mathbb{F}^{n\times k}$ is a diagonal matrix whose diagonal entries are  the singular values of $F$, namely  $\sigma_1, \sigma_2, \hdots, \sigma_n$, which are positive, and arranged in a non-increasing order. 
Suppose $G$ is  a frame for $ \mathbb{F}^{k}$, and let %In \cite{GittaSVD}, 
$M_G := U^*GV$. It shows  \cite{GittaSVD} that $G$ is a dual frame of $F$ if and only if there exists a matrix  $X \in \mathbb{F}^{n\times (k-n)}$ such that 
\begin{equation}\label{MofG} \; 
M_G= [\Sigma^{-1} \; X ]\in \mathbb{F}^{n\times k},
\end{equation}
where $\Sigma^{-1} $ is a diagonal matrix whose diagonal entries are  $1/\sigma_1, 1/\sigma_2, \hdots, 1/\sigma_n$.
Thus, a frame $G$ is a dual frame of of $F$ if and only if it  can be  written as the product
\begin{equation}\label{SVDdual}
G=UM_GV^* .
\end{equation}
The canonical dual frame of $F$ is
obtained by placing a $n\times (k-n)$ zero matrix as matrix $X$   in \eqref{MofG}.  

In this subsection we use  characterization \eqref{SVDdual}  to solve the dual frame completion problem.

\begin{theorem}\label{SVDresult} 
Let $F$ be a frame for $\mathbb{F}^n$, $|F|=k>n$, and let $H=[h_1  \, \hdots \, h_s],  1 \leq s \le  k$ be a given $n \times s$ matrix. Let $F=U\Sigma V^*$ be the singular value decomposition of $F$. 
Denote 
 \[ V^*=\begin{bmatrix}
V^*_{n \times s} & V^*_{n \times (k-s)} \\
V^*_{(k-n)\times s} & V^*_{(k-n) \times (k-s)} \\
\end{bmatrix} .\]
\begin{itemize} 
\item[i.] 

 There exists a dual frame   $G = \{h_1, \hdots, h_s, g_{s+1}, \hdots, g_{k} \} $  for $F$  if and only if 
  the columns of $H^*U - (V_{n\times s}^*)^*(\Sigma^{-1})^*$ are in the span of columns of  $(V^*_{(k-n)\times s})^*$.

 \item[ii.] If in addition $s= k-n$ and $V^*_{(k-n)\times (k-n)}$ is an  invertible matrix, then   the dual frame completion is unique.  

\item[iii.] The vectors $h_1, \hdots, h_s$ are the first $s$ vectors in the canonical dual of $F$ if and only if 
\begin{equation}
h_j = U\begin{bmatrix} v_{j,1}^*/\sigma 1  \\\vdots  \\v_{j,n}^*/\sigma_n  \end{bmatrix} \;  \text{ for all } j \in \{1, \hdots, s\}.
\end{equation}
  \end{itemize}
 
\end{theorem}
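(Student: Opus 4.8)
The plan is to mirror the structure of Theorems~\ref{direct} and~\ref{resultIAP}: translate the dual-frame condition $FG^*=I$ into a linear system over an unknown block, and then invoke Theorem~\ref{f1}. Using the SVD characterization \eqref{SVDdual}, a frame $G$ is a dual of $F$ if and only if $G=UM_GV^*$ with $M_G=[\Sigma^{-1}\;X]$ for some $X\in\mathbb{F}^{n\times(k-n)}$. First I would write $G=[H\;G']$ and compute $U^*GV=M_G$, so that $U^*[H\;G']V=[\Sigma^{-1}\;X]$. Multiplying on the right by $V^*$ is cleaner: $[H\;G']=U[\Sigma^{-1}\;X]V^*$. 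Partitioning $V^*$ into the four blocks as in the statement, the first $s$ columns of the right-hand side are $U(\Sigma^{-1}V^*_{n\times s}+XV^*_{(k-n)\times s})$, which must equal $H$. Taking conjugate transposes, this reads
\begin{equation}\label{svdlinsys}
(V^*_{(k-n)\times s})^*X^* = H^*U-(V^*_{n\times s})^*(\Sigma^{-1})^*,
\end{equation}
a linear system for $X^*$ of exactly the form $AB=C$ handled by Theorem~\ref{f1}.

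For part (i), I would argue both directions. If $G=[H\;G']$ is a dual frame, then by \eqref{SVDdual} there is an $X$ satisfying \eqref{svdlinsys}, so by Theorem~\ref{f1}(3) the columns of $H^*U-(V^*_{n\times s})^*(\Sigma^{-1})^*$ lie in the span of the columns of $(V^*_{(k-n)\times s})^*$. Conversely, if that span condition holds, Theorem~\ref{f1} yields an $X^*$ solving \eqref{svdlinsys}; setting $G'$ to be the last $k-s$ columns of $U[\Sigma^{-1}\;X]V^*$ produces a $G=[H\;G']$ with $U^*GV=[\Sigma^{-1}\;X]$, hence a dual frame of $F$. One should check that $G'$ together with $H$ actually forms a frame (a spanning set); since $G=UM_GV^*$ and $M_G=[\Sigma^{-1}\;X]$ has rank $n$ because $\Sigma^{-1}$ is invertible, and $U,V$ are unitary, $G$ has rank $n$ and spans $\mathbb{F}^n$, so it is automatically a frame — this is the one place to be careful but it is routine.

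For part (ii), when $s=k-n$ the block $V^*_{(k-n)\times s}$ is the square matrix $V^*_{(k-n)\times(k-n)}$; if it is invertible then \eqref{svdlinsys} has the unique solution $X^*=\big((V^*_{(k-n)\times(k-n)})^*\big)^{-1}\big(H^*U-(V^*_{n\times s})^*(\Sigma^{-1})^*\big)$, and since $G'$ is determined by $X$, the completion is unique. For part (iii), the canonical dual corresponds to $X=0$ in \eqref{MofG}, i.e. $G=U[\Sigma^{-1}\;0]V^*$; reading off the $j$-th column for $j\le s$ gives $g_j=U\Sigma^{-1}(\text{$j$-th column of }V^*)=U\big(v_{j,1}^*/\sigma_1,\dots,v_{j,n}^*/\sigma_n\big)^{T}$, which is precisely the stated formula, and the equivalence follows because $X$ is uniquely determined by $G$ (as $M_G=U^*GV$), so $h_1,\dots,h_s$ matching these formulas forces the relevant columns of $X$ to vanish. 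The main obstacle is bookkeeping: keeping the conjugate transposes, the block partition of $V^*$ versus $V$, and the placement of $U$ straight, plus confirming the "it is a frame" point; there is no deep difficulty once \eqref{svdlinsys} is set up correctly.
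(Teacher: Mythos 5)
Your proposal is correct and follows essentially the same route as the paper: both reduce the problem via the SVD characterization $G=UM_GV^*$, $M_G=[\Sigma^{-1}\;X]$, to the linear system $XV^*_{(k-n)\times s}=U^*H-\Sigma^{-1}V^*_{n\times s}$ (your equation is just its conjugate transpose, matching the statement's phrasing) and then invoke Theorem~\ref{f1}, with the same treatment of the invertible-block case and of $X=0$ for the canonical dual. Your added remark that the resulting $G$ automatically has rank $n$ and hence is a frame is a small point the paper leaves implicit.
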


\begin{proof}
Let $F$ be a frame for $\mathbb{F}^n$. Suppose $h_1\hdots, h_s \in \mathbb{F}^n$ are fixed. We aim to find a dual frame $G$ of $F$  such that $G =\{h_1 \hdots, h_s,g_{s+1}, \hdots, g_k\}$ for some unknown $g_{s+1}\hdots, g_k \in \mathbb{F}$. We must have $G=UM_GV^* $, that is,
\begin{equation}\label{fromSVD}
U^*[h_1 \; \hdots \;  h_s \;  g_{s+1} \;  \hdots \;  g_k]=U^*G=M_GV^*
\end{equation} 
 for some unknown $X$  in \eqref{MofG}. 
 This implies that 
 \begin{equation}\label{keySVD}
  X V^*_{(k-n)\times s} = U^*H -\Sigma^{-1} V^*_{n\times s}
  \end{equation} 
  Then by Theorem \ref{f1}, we have the result (i).\\
For ($ii$), let $s=k-n$ and assume that the submatrix $V^*_{(k-n)\times s}$ is   invertible. Then we have 
\(X=\left(V^*_{(k-n)\times s}\right)^{-1}(U^*H -\Sigma^{-1} V^*_{n\times s}),\)
which defines a unique $n\times k$ matrix $M_G$ using \eqref{MofG}. 
 To prove ($iii$) note that the columns of $H$ are the first $s$ vectors of the canonical dual frame of $F$ if and only if the matrix $X$ in \eqref{MofG} is the zero matrix. Now the conclusion follows from \eqref{keySVD}.
\end{proof}

%\begin{example} The SVD for frame $F$ in Example\ref{exampleGenCoef} is \[F=U\Sigma_FV^T = \begin{bmatrix} 1&0&0\\0&0&1\\0&1&0\end{bmatrix} \begin{bmatrix} \sqrt{5}&0&0&0\\0&1&0&0\\0&0&1&0\end{bmatrix} \begin{bmatrix} \frac{1}{\sqrt{5}}&0&0&\frac{2}{\sqrt{5}}\\0&0&1&0\\0&1&0&0\\\frac{-2}{\sqrt{5}}&0&0&\frac{1}{\sqrt{5}}\end{bmatrix}.  \] \end{example}

%\vspace{2.3mm}

%
%If $P$ is 
%the product of   matrices corresponding to the row operations  such that 
%\begin{equation}\label{conditionP}
%PF^* = 
%\begin{bmatrix}
%I_n \\ O
%\end{bmatrix}, \end{equation} where $O$ is a $  (k-n)\times n$ zero matrix, then $G$ is given by 
%\begin{equation}\label{dualGP}
%G= [I_n \; A] \, P \end{equation}  for any 
%$n\times (k-n)$ matrix $A$. %, $G$ is a  dual frame of $F$.

\begin{remark}

We note that there is relation between the characterization of a dual frame 
$G_1$ using  the  singular values of the frame $F$ and 
$G_2$ using product   matrix $P$ corresponding to the row operations as in
equations \eqref{conditionP} and 
 \eqref{dualGP}. 
 To see this, we rewrite  $G_1=UM_GV^*$ as 
 $G_1= U \Sigma^{-1} \begin{bmatrix}
I_n & B
\end{bmatrix} V^*$, where 
 \begin{equation}
B = \begin{bmatrix}  x_{1,1}/\sigma_1 & \hdots & x_{1,k-n}/\sigma_1 \\
 x_{2,1}/\sigma_2&\hdots & x_{2,k-n}/\sigma_2\\
\vdots&\hdots & \vdots\\
 x_{n,1}/\sigma_n &\hdots & x_{n,k-n}/\sigma_n \end{bmatrix}. 
\end{equation}
Now, let 
$
G_2= \begin{bmatrix}
I_n & A
\end{bmatrix} \, P$. 
When $A=B$,  we have 
$$  G_2 = \Sigma U^{-1} G_1 (V^*)^{-1} P.$$ 
 \end{remark}

\begin{example}
The SVD of the frame studied in Example~\ref{exampleGenCoef} is 
\[F=U\cdot \Sigma_F \cdot V^T=\begin{bmatrix} 1&0&0\\0&0&1\\0&1&0\end{bmatrix}\cdot \begin{bmatrix} 5^{1/2}&0&0&0\\0&1&0&0\\0&0&1&0\end{bmatrix} \cdot  \left(5^{-1/2}\begin{bmatrix}  1&0&0& 2  \\0&0&1&0\\0&1&0&0\\-2  &0&0&   1\end{bmatrix}\right). \] By equation \eqref{SVDdual},  any dual frame $D$ of $F$ equals
\[D= U\cdot \begin{bmatrix} 5^{-1/2}&0&0&s\\0&1&0&t\\0&0&1&w\end{bmatrix} \cdot V^T=\frac{1}{5} \begin{bmatrix} 1-2\cdot 5^{1/2}s & 0&0& 5^{1/2} s +2\\ -2\cdot 5^{1/2}w & 5&0& 5^{1/2} w\\-2\cdot 5^{1/2}t & 0&5& 5^{1/2} t\end{bmatrix} \] 
for any $  s, t, w\in \mathbb{R}$.
The dual frame $G$ that was initially purused in Example~\ref{exampleGenCoef} can be obtained via the SVD approach for the choice $s=w=- {1}/{2\sqrt{5}}$, $t=-{3}/{2\sqrt{5}}$. We can easily see that equation \eqref{keySVD} is satified, and by Theorem~\ref{SVDresult}(ii.), since $s=k-n=4-2=2$, the desired dual frame must be unique as the determinant of the $2\times 2$ submatrix in $V^T$ in the lower-left corner is $2/\sqrt{5}$ (so the submatrix  is invertible). 
\end{example}

\end{document}